\documentclass[a4paper]{article}
\usepackage[latin1]{inputenc}
\usepackage{amsmath}
\usepackage{amsfonts}
\usepackage{amssymb}
\usepackage{graphicx}
\usepackage{amsthm} 	
\usepackage{hyperref}
\usepackage{todonotes}
\usetikzlibrary{backgrounds}	
\usepackage[toc]{appendix}

\usepackage{graphicx,xcolor}	

\usepackage{soul}

\usepackage{xcolor}
\pagecolor{white}

\usepackage{esint}				

\usepackage[a4paper,top=1.7cm,bottom=1.7cm,left=1.7cm,right=1.7cm]{geometry} 




\allowdisplaybreaks

\newtheorem{theorem}{Theorem}[section]
\newtheorem{proposition}[theorem]{Proposition}
\newtheorem{corollary}[theorem]{Corollary}

\title{A degenerating Robin-type traction problem in a periodic domain}
\author{ Matteo Dalla Riva  \thanks{Dipartimento di Ingegneria, Universit\`a degli Studi di Palermo, Viale delle Scienze, Ed. 8, 90128 Palermo, Italy. E-mail: matteo.dallariva@unipa.it } ,  Gennady Mishuris \thanks{Department of Mathematics, Aberystwyth University, Aberystwyth, SY23 3BZ Wales, UK. E-mail: ggm@aber.ac.uk} , Paolo Musolino\thanks{Dipartimento di Scienze Molecolari e Nanosistemi, Universit\`a Ca' Foscari Venezia, via Torino 155, 30172 Venezia Mestre, Italy. E-mail: paolo.musolino@unive.it}\ \thanks{{\it Corresponding author.}}}

\date{September 06, 2022}

\begin{document}

\maketitle

\noindent

{\small
{\bf Abstract:}  We consider a linearly elastic material with a periodic set of voids. On the boundaries of the voids we set a Robin-type traction condition. Then we investigate the asymptotic behavior of the displacement solution as the Robin condition turns into a pure traction one.  To wit, there will be a matrix function {$b[k](\cdot)$ that depends analytically on a real parameter $k$ and vanishes for $k=0$ and we multiply the Dirichlet-like part of the Robin condition by $b[k](\cdot)$}. We show that the displacement solution can be written in terms of power series of $k$ that converge for $k$ in a whole neighborhood of $0$. For our analysis we use  the Functional Analytic Approach.

\vspace{9pt}

\noindent
{\bf Keywords:}  Robin boundary value problem; integral representations, integral operators, integral equations methods; linearized elastostatics; periodic domain\vspace{9pt}

\noindent
{{\bf 2020 Mathematics Subject Classification:}}  35J65; 31B10; 45F15; 74B05.}

\section{Introduction}\label{introd}

There is almost a century of literature on the mathematical analysis of  perforated plates and porous materials
(see, e.g., \cite{Na35, Ho35, Ho52, BaHi60, GoJa65}, see also Mityushev {\it et al.} \cite{MiAnGl22} for a review). {Examples can be found in  the applications to the study of porous coatings and  of interfacial coatings. If the thickness of the coating is  smaller than the characteristic size of the pores  (while its material properties are  weaker/softer  in comparison with those of the main composite material), the corresponding mathematical problems may degenerate into periodic  problems with conditions of Robin type (see \cite{KlMo98, AnAvKoMo01, Mi01, Mi04, SoPiMiMi15, XuTiXi20}, see also the books by Milton \cite[Chap.~1]{Mi02} and Movchan {\it et al.}~\cite{MoMoPo02}).} In dimension two, these problems can be analyzed with complex variable techniques  (see, e.g., Kapanadze {\it et al.~}\cite{KaMiPe15b},
Dryga\'s {\it et al.~}\cite{DrGlMiNa20},
Gluzman {\it et al.~}\cite{GlMiNa18}). For analog problems in dimension $n \geq 2$, one may resort to integral equation methods (as, {e.g.,}  in Ammari and Kang \cite{AmKa07}).

In this paper we study the Lam\'e system in a periodic domain with a Robin-type traction condition on the boundary. The trace of the displacement part (we may also say the ``Dirichlet-type term") of the boundary condition is multiplied by a matrix function ${b[k](\cdot)}$ that depends analytically on a positive parameter $k$ and that vanishes for $k=0$. Then, as $k$ approaches $0$ the Robin boundary condition degenerates into a pure traction one (a natural condition to have when dealing with the Lam\'e equations). We  study the map that takes $k$ to the displacement solution $u[k]$ and, under suitable conditions on ${b[k](\cdot)}$, we show that $k\mapsto u[k]$ can be described in terms of power series of $k$ that converge for $k$ in a neighborhood of $0$.  A similar result was obtained in \cite{MuMi18} for the analog problem in a bounded domain with a single hole. Here we show that the approach of \cite{MuMi18} can be adapted to the case of infinite periodic domains.

\subsection{The problem}

We start by presenting the geometric setting. We fix once for all
\[
n\in {\mathbb{N}}\setminus\{0,1 \}\,,\qquad  (q_{11},\dots,q_{nn})\in]0,+\infty[^{n}\,.
\]
Here ${\mathbb{N}}$ denotes the
set of natural numbers including $0$. We take  $Q :=\Pi_{j=1}^{n}]0,q_{jj}[$ as fundamental periodicity cell and we denote by $q$ the diagonal matrix with $(j,j)$ entry equal to $q_{jj}$ for all $j \in \{1,\dots,n\}$. We construct our periodic domain by removing from $\mathbb{R}^n$ congruent copies of a bounded domain of class $C^{m,\alpha}$. (For the definition of sets and functions of the Schauder class $C^{j,\alpha}$ ($j \in \mathbb{N}$)  we refer, e.g., to Gilbarg and
Trudinger~\cite{GiTr83}.) Therefore, we fix once and for all $m\in {\mathbb{N}}\setminus\{0\}$, $\alpha\in]0,1[$, and
we assume that
\[
\text{${\Omega_Q}$ is a bounded open subset of ${\mathbb{R}}^{n}$ of class $C^{m,\alpha}$ such that $\overline{\Omega_Q}\subseteq Q$.}
\]
We define the periodic domain
\[
{\mathbb{S}} [\Omega_Q]^{-}{:=} {\mathbb{R}}^{n}\setminus\bigcup_{z\in{\mathbb{Z}}^{n} }(qz+\overline{\Omega_Q})\, .
\]
{To introduce the Lam\'e equations in ${\mathbb{S}} [\Omega_Q]^{-}$, we denote by $T$ the function from $ ]1-(2/n),+\infty[\times M_n(\mathbb{R})$ to $M_n(\mathbb{R})$ defined by $T(\omega,A){:=} (\omega-1)(\mathrm{tr}A)I_n+(A+A^t)$  for all  $\omega \in ]1-(2/n),+\infty[$, $A \in M_n(\mathbb{R})$.} Here $M_n(\mathbb{R})$ denotes the space of $n\times n$ matrices with real entries, $I_n$ denotes the $n\times n$ identity matrix, $\mathrm{tr}A$ and $A^t$ denote the trace and the transpose matrix of $A$, respectively. {We note} that if we set $L[\omega]{:=} \Delta+\omega \nabla \mathrm{div}$, then $L[\omega]u=\mathrm{div} \,T(\omega,Du)$ for all regular vector valued functions $u$, where  $Du$ denotes the Jacobian matrix of  $u$. Then we take
\[
\text{a matrix $B \in M_n(\mathbb{R})$, a function $g \in C^{m-1,\alpha}(\partial \Omega_Q, \mathbb{R}^n)$, and a function $b \in C^{m-1,\alpha}(\partial \Omega_Q,M_n(\mathbb{R}))$}
\]
such that
\begin{align*}
&\text{$\bullet$ {$\xi^t b(x)\xi \leq 0$ for all $\xi \in \mathbb{R}^n$ and all $x\in\partial\Omega_Q$},}\\
&\text{$\bullet$ $\mathrm{det}\int_{\partial \Omega_Q}b\, d\sigma\neq 0$.}
\end{align*}
(Note that the last condition implies that $\mathrm{det}\,b(x_0)\neq 0$ for some {$x_0\in\partial\Omega_Q$}.)
With these ingredients we   {write a boundary} value problem for the Lam\'e equation in ${\mathbb{S}} [\Omega_Q]^-$ with a Robin-type boundary condition. That is,
\[
 \left \lbrace
 \begin{array}{ll}
  \mathrm{div}\, T(\omega, Du )= 0 & \mathrm{in}\  {\mathbb{S}} [\Omega_Q]^-\,, \\
u(x+qe_j) =u(x) +Be_j&  \textrm{$\forall x \in \overline{\mathbb{S}[\Omega_Q]^{-}}, \forall j\in \{1,\dots,n\}$}, \\
T(\omega,Du(x))\nu_{\Omega_Q}(x)+ b(x)u(x)=g(x) & \textrm{$\forall x \in \partial \Omega_Q$}\,,
 \end{array}
 \right.
\]
where $\{e_1,\dots,e_n\}$ is the canonical basis of $\mathbb{R}^n$ and $\nu_{\Omega_Q}$ denotes the outward unit normal to $\partial \Omega_Q$. We {know} that for  $\omega \in ]1-(2/n),+\infty[$ the solution $u$ of the problem above exists, is unique, and belongs to the Schauder class $C^{m,\alpha}(\overline{\mathbb{S}[\Omega_Q]^{-}}, \mathbb{R}^n)$  (see \cite[Thm.~4.4]{DaMiMu22}). We now want the term $b(x)u(x)$ in the boundary condition to disappear in a suitable way as a certain positive parameter $k$ tends to zero. Then we take $k_0>0$ and we introduce
\[
\text{an analytic map $]-k_0,k_0[\ni k\mapsto b[k]\in C^{m-1,\alpha}(\partial \Omega_Q,M_n(\mathbb{R}))$}
\]
that satisfies the following conditions:
\begin{align}
&\text{$\bullet$ {$\xi^t b[k](x)\xi \leq 0$ for all $\xi \in \mathbb{R}^n$, all $x\in\partial\Omega_Q$, and all $k\in]0,k_0[$}}\label{c1}\\
&\text{$\bullet$ $\mathrm{det}\int_{\partial \Omega_Q}b[k]\, d\sigma\neq 0$ for all $k\in ]0,k_0[$,}\label{c2} \\
&\text{$\bullet$ $b[0]=\lim_{k\to 0}b[k]=0$,\label{c3}}
\end{align}
and for a fixed $\omega \in ]1-(2/n),+\infty[$    we consider the following problem
 \begin{equation}\label{bvp}
 \left \lbrace
 \begin{array}{ll}
  \mathrm{div}\, T(\omega, Du )= 0 & \mathrm{in}\  {\mathbb{S}} [\Omega_Q]^-\,, \\
u(x+qe_j) =u(x) +Be_j&  \textrm{$\forall x \in \overline{\mathbb{S}[\Omega_Q]^{-}}, \forall j\in \{1,\dots,n\}$}, \\
T(\omega,Du(x))\nu_{\Omega_Q}(x)+ b[k](x)u(x)=g(x) & \textrm{$\forall x \in \partial \Omega_Q$}\,.
 \end{array}
 \right.
 \end{equation}
 For each $k\in ]0,k_0[$  problem \eqref{bvp} has a unique {solution $u\in C^{m,\alpha}(\overline{\mathbb{S}[\Omega_Q]^{-}},\mathbb{R}^n)$},  which we denote by $u[k]$ to emphasize its dependence on $k$. For $k=0$ we have $b[0]=0$ and the Robin-type traction condition of problem \eqref{bvp} degenerates into Neumann-type one. The resulting pure traction problem may be not solvable if certain compatibility conditions are not satisfied.

\subsection{The main result}

 Our aim is to study the asymptotic behavior of the solution $u[k]$ of problem \eqref{bvp} as $k> 0$ approaches zero. More precisely, we plan to apply the {\it Functional Analytic Approach} of \cite{DaLaMu21} and the periodic elastic single layer potential $v_{q}^-[\omega,\cdot]$ (see Section \ref{notprel}) to represent the solution in terms of convergent power series in suitable Banach spaces. To succeed, however, we need an additional assumption on the function {$k\mapsto b[k]$}. By conditions \eqref{c1} and \eqref{c3} and by the real analyticity of the function $k\mapsto b[k]$ we see that there exists $l\in\mathbb{N}\setminus\{0\}$ such that
\begin{align*}
&\text{$\bullet$ $k\mapsto k^{-l}b[k]$ is real analytic from $]-k_0,k_0[$ to $C^{m-1,\alpha}(\partial \Omega_Q,M_n(\mathbb{R}))$,}\\
&\text{$\bullet$ the matrix function $\tilde b:=\lim_{k\to 0}k^{-l}b[k]$ belongs to $C^{m-1,\alpha}(\partial \Omega_Q,M_n(\mathbb{R}))$ and is not $0$,}\\
&\text{$\bullet$ {$\xi^t \tilde{b}(x)\xi \leq 0$ for all $\xi \in \mathbb{R}^n$ and all $x\in\partial\Omega_Q$.} }
\end{align*}
 We shall further assume that
 \begin{equation}\label{c4}
 \text{$\bullet$ we have $\mathrm{det}\int_{\partial \Omega_Q}\tilde b\, d\sigma\neq 0$.}
 \end{equation}
Then, with conditions \eqref{c1}, \eqref{c2}, \eqref{c3}, and \eqref{c4} we have the following Theorem \ref{thm:expbvp}, whose proof we present in the forthcoming sections.
\begin{theorem}\label{thm:expbvp}
There exist { a sequence $\{(\hat{\mu}_j,\hat{c}_j)\}_{j\in \mathbb{N}}$ in $C^{m-1,\alpha}(\partial \Omega_Q,\mathbb{R}^n)_0\times \mathbb{R}^n$} and a real number $k_\# \in ]0,k_0[$ such that
\begin{equation}\label{eq:exbvp2}
 u[k](x)=\sum_{j=0}^{+\infty}v_{q}^-[\omega,\hat{\mu}_j](x)k^j+\frac{1}{k^{l}}\sum_{j=0}^{+\infty}\hat{c}_jk^j+Bq^{-1}x \qquad \forall x \in \overline{\mathbb{S}[\Omega_Q]^{-}}\, , \forall k \in ]0,k_\#[\, ,
\end{equation}
where  for all $k \in ]-k_\#,k_\#[$ the series $\sum_{j=0}^{+\infty}v_{q}^-[\omega,\hat{\mu}_j](x)k^j$ converges normally in $C^{m,\alpha}_q(\overline{\mathbb{S}[\Omega_Q]^{-}},\mathbb{R}^n)$ and $\sum_{j=0}^{+\infty}\hat{c}_jk^j$ converges normally in $\mathbb{R}^n$.
\end{theorem}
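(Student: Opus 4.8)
The plan is to reformulate problem \eqref{bvp} as an integral equation using the periodic elastic single layer potential $v_q^-[\omega,\cdot]$. We look for a solution in the form
\[
u[k](x)=v_q^-[\omega,\mu](x)+c+Bq^{-1}x\,,\qquad x\in\overline{\mathbb{S}[\Omega_Q]^{-}}\,,
\]
where $\mu\in C^{m-1,\alpha}(\partial\Omega_Q,\mathbb{R}^n)_0$ (the subspace of functions with zero integral mean, needed for well-posedness of the single layer representation) and $c\in\mathbb{R}^n$ is a constant vector accounting for the rigid translations that the pure traction problem cannot fix. The Lam\'e equation and the periodicity condition are then automatically satisfied, by the known properties of $v_q^-[\omega,\cdot]$ recalled in Section~\ref{notprel}, and only the Robin-type boundary condition remains. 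Plugging the representation into the third line of \eqref{bvp} and using the jump formula for the traction of the single layer potential, I obtain a system of equations for the pair $(\mu,c)$ in which the parameter $k$ enters only through the (analytic) matrix function $b[k]$.

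The key algebraic idea, following \cite{MuMi18}, is to exploit the factorization $b[k]=k^{l}\,\beta[k]$ with $\beta[k]:=k^{-l}b[k]$ real analytic in $k$ and $\beta[0]=\tilde b\neq 0$. Rescaling appropriately — essentially dividing the equation coming from the zero-mean projection and the equation coming from the $\mathbb{R}^n$-valued projection by suitable powers of $k$, and introducing a rescaled unknown for the constant part (this is the role of the $k^{-l}$ factor in front of $\sum_j\hat c_jk^j$ in \eqref{eq:exbvp2}) — I expect to arrive at an equation of the form $\mathcal{M}[k](\mu,c)=\mathcal{F}$, where $k\mapsto\mathcal{M}[k]$ is a real analytic map from $]-k_0,k_0[$ into the space of bounded linear operators on $C^{m-1,\alpha}(\partial\Omega_Q,\mathbb{R}^n)_0\times\mathbb{R}^n$, and the datum $\mathcal{F}$ is built from $g$. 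The crucial point is that $\mathcal{M}[0]$ is invertible: the operator part acting on $\mu$ is a compact perturbation of an isomorphism (standard for periodic elastic single layer potentials) combined with the limiting matrix $\tilde b$, and the invertibility of $\mathcal{M}[0]$ is exactly where conditions \eqref{c3} and, decisively, \eqref{c4} (namely $\det\int_{\partial\Omega_Q}\tilde b\,d\sigma\neq 0$) are used — condition \eqref{c4} guarantees that the finite-dimensional block governing the constant $c$ is nonsingular, so that the degenerating problem has a nondegenerate limit after rescaling.

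Once $\mathcal{M}[0]$ is shown to be an isomorphism, I invoke the fact that invertibility is an open condition and that the inverse of an operator depends analytically on the operator (for instance via the Neumann series, or the analyticity results for the map $A\mapsto A^{-1}$ used throughout the Functional Analytic Approach, see \cite{DaLaMu21}): there exists $k_\#\in]0,k_0[$ such that $\mathcal{M}[k]$ is invertible for all $k\in]-k_\#,k_\#[$ and $k\mapsto(\mu[k],c[k])=\mathcal{M}[k]^{-1}\mathcal{F}$ is real analytic on $]-k_\#,k_\#[$. Expanding this real analytic map into its Taylor series at $k=0$ gives coefficients $(\hat\mu_j,\hat c_j)$ with the stated normal convergence in $C^{m-1,\alpha}(\partial\Omega_Q,\mathbb{R}^n)_0\times\mathbb{R}^n$; applying the linear and continuous operator $v_q^-[\omega,\cdot]$ from $C^{m-1,\alpha}(\partial\Omega_Q,\mathbb{R}^n)_0$ to $C^{m,\alpha}_q(\overline{\mathbb{S}[\Omega_Q]^{-}},\mathbb{R}^n)$ transfers the normal convergence to the series $\sum_j v_q^-[\omega,\hat\mu_j]k^j$, and undoing the rescaling of the constant part reinstates the $k^{-l}$ prefactor, yielding exactly \eqref{eq:exbvp2}. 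Finally, uniqueness of the solution of \eqref{bvp} for $k\in]0,k_\#[$ (known from \cite[Thm.~4.4]{DaMiMu22}, or rather its analog) identifies this series representation with $u[k]$.

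The main obstacle I anticipate is the second step: setting up the rescaled operator equation so that $\mathcal{M}[0]$ is genuinely invertible. One has to be careful that the single layer representation with a zero-mean density plus a free constant captures \emph{all} solutions and does so with a unique pair $(\mu,c)$; that the various powers of $k$ are distributed across the two components of the equation in the unique way that produces a finite, invertible limit; and that the limiting finite-dimensional block is controlled precisely by $\int_{\partial\Omega_Q}\tilde b\,d\sigma$, so that \eqref{c4} is exactly the right hypothesis. Verifying the invertibility of $\mathcal{M}[0]$ — a Fredholm-type argument for the operator on $C^{m-1,\alpha}(\partial\Omega_Q,\mathbb{R}^n)_0$ coupled with the nonsingularity of the matrix block — is where the real work lies; once that is in place, the analytic dependence and the series expansion are routine consequences of the Functional Analytic Approach.
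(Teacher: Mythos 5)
Your proposal follows essentially the same route as the paper: represent $u[k]$ as $v_q^-[\omega,\mu]+c/k^l+Bq^{-1}x$, reduce to the operator equation $\Lambda[k](\mu,c)=\mathfrak{D}[k]$ on $C^{m-1,\alpha}(\partial\Omega_Q,\mathbb{R}^n)_0\times\mathbb{R}^n$ (your $\mathcal{M}[k]$), show $\Lambda[0]$ is invertible using the limit $\tilde b$ and condition \eqref{c4}, and then obtain analyticity of $k\mapsto\Lambda[k]^{-1}$ via Neumann series, transferring the resulting power-series convergence through the bounded operator $v_q^-[\omega,\cdot]$. The only step you flag as ``the real work'' — invertibility of $\Lambda[0]$ — is precisely the point the paper does not reprove but delegates to \cite[Lem.~4.2]{DaMiMu22}, just as it delegates the representation of Proposition~\ref{prop:exbvp} to \cite[Thm.~4.4]{DaMiMu22}; otherwise the argument is identical.
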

Equation  \eqref{eq:exbvp2} shows that $u[k]$ (which is defined only for $k \in ]0,k_0[$) can be represented in terms of power series which converge in a whole neighborhood of the degenerate value $k=0$. In Corollary \ref{cor:seq} we see how the sequence $\{(\hat{\mu}_j,\hat{c}_j)\}_{j\in \mathbb{N}}$ can be computed solving certain boundary integral equations.  {To conclude,} we observe that the Functional Analytic Approach of this paper has already been used for the analysis of perturbation problems for the Lam\'e equation in \cite{DaLa10a, DaLa10b} for bounded domains and in \cite{DaMu14, FaLuMu21} for periodic domains.

\section{Preliminaries of periodic potential theory for the Lam\'e equations}\label{notprel}

In order to construct the solution of problem \eqref{bvp}, we will exploit a periodic version of potential theory for the Lam\'e {equations. We say} that a function $f$ on $\overline{\mathbb{S}[\Omega_Q]^{-}}$ is $q$-periodic if $f(x+qz)=f(x)$ for all $x \in \overline{\mathbb{S}[\Omega_Q]^{-}}$ and $z \in \mathbb{Z}^n$. To construct periodic elastic layer potentials, we introduce a periodic analog of the fundamental solution of $L[\omega]$ (cf., {e.g.}, Ammari and Kang \cite[Lemma 9.21]{AmKa07}, \cite[Thm.~3.1]{DaMu14}). So let $\Gamma_{n,\omega}^{q}{:=} (\Gamma_{n,\omega,j}^{q,k})_{(j,k)\in\{1,\dots,n\}^2}$ be the $n\times n$ matrix of $q$-periodic distributions with $(j,k)$ entry defined by
\[
\Gamma_{n,\omega,j}^{q,k}{:=} \sum_{z \in \mathbb{Z}^n \setminus \{0\}} \frac{1}{4 \pi^2 |Q|  |q^{-1}z|^2}\Biggl[ -\delta_{j,k}+\frac{\omega}{\omega+1}\frac{(q^{-1}z)_j(q^{-1}z)_k}{|q^{-1}z|^2}\Biggr]E_{2 \pi iq^{-1}z} \qquad \forall (j,k) \in \{1,\dots,n\}^2\,,
\]
where $E_{2\pi i q^{-1} z}(x){:=} e^{2\pi i (q^{-1} z)\cdot x}$  for all $x\in{\mathbb{R}}^{n}$ and $z\in\mathbb{Z}^n$. Then {$
L[\omega] \Gamma_{n,\omega}^{q}=\sum_{z \in \mathbb{Z}^n}\delta_{qz}I_n-\frac{1}{ |Q|}I_n$, in} the sense of distributions, where $\delta_{qz}$ denotes the Dirac measure with mass at $qz$. We mention that similar constructions have been used to define a periodic analog of the fundamental solution for an elliptic differential operator  in \cite[Chapter 12]{DaLaMu21} and for the heat equation in Luzzini \cite{Lu20}. We  set $\Gamma_{n,\omega}^{q,j}{:=} \bigl(\Gamma_{n,\omega,i}^{q,j}\bigr)_{i \in \{1,\dots,n\}}$, which we think as column vectors for all $j\in\{1,\dots,n\}$.  We now  introduce the periodic single layer potential. So, if $\mu \in C^{0,\alpha}(\partial {\Omega_Q},\mathbb{R}^n)$, then we denote by $v_q[\omega, \mu]$ the periodic single layer potential, defined as
\[
v_q[\omega, \mu](x){:=} \int_{\partial {\Omega_Q}}\Gamma^q_{n,\omega}(x-y)\mu(y)\,d\sigma_y \qquad \forall x \in \mathbb{R}^n\,.
\]
 If $\mu\in C^{0,\alpha}(\partial{\Omega_Q},\mathbb{R}^n)$, then $v_{q}[\omega,\mu]$ is $q$-periodic,  $L[\omega]v_{q}[\omega,\mu]
=
-\frac{1}{|Q|}\int_{\partial{\Omega_Q}}\mu \,d\sigma$ in  $
 {\mathbb{R}}^{n}\setminus\partial{\mathbb{S}}[{\Omega_Q}]^-$. We set
\begin{align*}
&V_q[\omega, \mu](x){:=} v_q[\omega, \mu](x) \quad \forall x \in \partial \Omega_Q\,, \quad W_{q}^\ast[\omega, \mu](x){:=} \int_{\partial {\Omega_Q}}\sum_{l=1}^n \mu_{l}(y)T(\omega,D\Gamma_{n,\omega}^{q,l}(x-y))\nu_{{\Omega_Q}}(x)\,d\sigma_y \quad \forall x \in \partial {\Omega_Q}\,.
\end{align*}
 If $\mu\in C^{m-1,\alpha}(\partial{\Omega_Q},\mathbb{R}^n)$, then
$v^{-}_{q}[\omega,\mu]{:=} v_{q}[\omega,\mu]_{|\overline{\mathbb{S}[\Omega_Q]^{-}}}$ belongs to the Schauder space of $q$-periodic functions $C^{m,\alpha}_{q}
(\overline{\mathbb{S}[\Omega_Q]^{-}},\mathbb{R}^n)$ (equipped with its usual norm) and the operator $\mu \mapsto v^{-}_{q}[\omega,\mu]$   is  continuous from  $C^{m-1,\alpha}(\partial{\Omega_Q},\mathbb{R}^n)$ to $C^{m,\alpha}_{q}
(\overline{\mathbb{S}[\Omega_Q]^{-}},\mathbb{R}^n)$. Moreover, the operator $\mu \mapsto W_{q}^\ast[\omega,\mu]$ is  continuous from   $C^{m-1,\alpha}(\partial{\Omega_Q},\mathbb{R}^n)$ to itself, and we have $
T\bigl(\omega,Dv_{q}^{-}[\omega,\mu](x)\bigr)\nu_{{\Omega_Q}}(x)=\frac{1}{2}\mu(x)+W_{q}^\ast[\omega,\mu](x)$ for all $x \in \partial{\Omega_Q}$ and all $\mu \in C^{m-1,\alpha}(\partial{\Omega_Q},\mathbb{R}^n)$.

\section{Integral equation formulation of (\ref{bvp}) and proof of Theorem \ref{thm:expbvp}}
\label{inteqbvp}
First of all, by exploiting the periodic elastic single layer potential and \cite[Thm.~4.4]{DaMiMu22} on the representation of the solution of a Robin-type traction problem in a periodic domain, we immediately deduce the validity of the following proposition where we convert problem \eqref{bvp} into an integral equation.
\begin{proposition}\label{prop:exbvp}
Let $k\in ]0,k_0[$. Let $C^{m-1,\alpha}(\partial {\Omega_Q},\mathbb{R}^n)_0{:=} \left\{f \in C^{m-1,\alpha}(\partial {\Omega_Q},\mathbb{R}^n)\colon \int_{\partial {\Omega_Q}}f \, d\sigma=0\right\}$. Then
\[
 u[k](x)=v_{q}^-[\omega,{\mu_k}](x)+\frac{c_k}{k^{l}}+Bq^{-1}x \qquad \forall x \in \overline{\mathbb{S}[\Omega_Q]^{-}}\, ,
\]
where {$(\mu_k,c_k)$} is the unique {solution}   in $C^{m-1,\alpha}(\partial\Omega_Q,\mathbb{R}^n)_0\times \mathbb{R}^n$ {of}
\begin{align}\label{eq:exbvp3}
&\frac{1}{2}\mu(x) +W_{q}^\ast[\omega,\mu](x)+b[k](x)\Big(V_{q}[\omega, \mu](x)+\frac{c}{k^{l}}\Big)\\\nonumber &\qquad\qquad\qquad\qquad=g(x)-T(\omega,Bq^{-1})\nu_{\Omega_Q}(x)- { b[k](x)}Bq^{-1}x\qquad \forall x \in \partial \Omega_Q\, .
\end{align}
\end{proposition}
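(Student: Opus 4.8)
The plan is to remove the non-periodic part of the problem and then invoke the representation theorem \cite[Thm.~4.4]{DaMiMu22} for Robin-type traction problems in periodic domains. First I would observe that the affine map $x\mapsto Bq^{-1}x$ solves the homogeneous Lam\'e system, since its Jacobian is the constant matrix $Bq^{-1}$ and hence $\mathrm{div}\,T(\omega,D(Bq^{-1}x))=L[\omega](Bq^{-1}x)=0$, and that it carries exactly the quasi-periodicity jump appearing in \eqref{bvp}, because $Bq^{-1}(x+qe_j)=Bq^{-1}x+Be_j$ for every $j\in\{1,\dots,n\}$. Therefore, setting $\tilde u:=u[k]-Bq^{-1}x$, the function $\tilde u$ will be genuinely $q$-periodic, will belong to $C^{m,\alpha}_q(\overline{\mathbb{S}[\Omega_Q]^-},\mathbb{R}^n)$, and will solve $\mathrm{div}\,T(\omega,D\tilde u)=0$ in $\mathbb{S}[\Omega_Q]^-$.

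Next I would rewrite the Robin condition in terms of $\tilde u$. Since $A\mapsto T(\omega,A)$ is linear, on $\partial\Omega_Q$ we have $T(\omega,Du[k])=T(\omega,D\tilde u)+T(\omega,Bq^{-1})$, so the boundary condition of \eqref{bvp} becomes
\[
T(\omega,D\tilde u)\nu_{\Omega_Q}+b[k]\,\tilde u=g-T(\omega,Bq^{-1})\nu_{\Omega_Q}-b[k]\,Bq^{-1}x=:\tilde g_k\qquad\text{on }\partial\Omega_Q\,,
\]
and $\tilde g_k\in C^{m-1,\alpha}(\partial\Omega_Q,\mathbb{R}^n)$ because $b[k]\in C^{m-1,\alpha}(\partial\Omega_Q,M_n(\mathbb{R}))$ and $g\in C^{m-1,\alpha}(\partial\Omega_Q,\mathbb{R}^n)$. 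Thus $\tilde u$ is a $q$-periodic solution of a Robin-type traction problem with matrix $b[k]$ and datum $\tilde g_k$.

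Now fix $k\in]0,k_0[$. By \eqref{c1} the matrix function $b[k]$ satisfies $\xi^t b[k](x)\xi\le 0$ for all $\xi\in\mathbb{R}^n$ and all $x\in\partial\Omega_Q$, and by \eqref{c2} we have $\det\int_{\partial\Omega_Q}b[k]\,d\sigma\ne 0$, so $b[k]$ fulfils the hypotheses under which \cite[Thm.~4.4]{DaMiMu22} applies. Invoking that result, the unique $q$-periodic solution $\tilde u$ can be written as $\tilde u=v_q^-[\omega,\mu]+c'$ for a unique pair $(\mu,c')\in C^{m-1,\alpha}(\partial\Omega_Q,\mathbb{R}^n)_0\times\mathbb{R}^n$. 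I would then set $c'=c_k/k^l$ (a harmless relabeling, chosen with the later asymptotic analysis in mind) and use the jump relations recalled in Section~\ref{notprel}: on $\partial\Omega_Q$ one has $T(\omega,Dv_q^-[\omega,\mu])\nu_{\Omega_Q}=\tfrac12\mu+W_q^\ast[\omega,\mu]$ and $v_q^-[\omega,\mu]=V_q[\omega,\mu]$, while $D(c_k/k^l)=0$. Substituting into $T(\omega,D\tilde u)\nu_{\Omega_Q}+b[k]\,\tilde u=\tilde g_k$ turns the boundary condition precisely into \eqref{eq:exbvp3}. Conversely, a solution $(\mu,c)$ of \eqref{eq:exbvp3} yields, through $v_q^-[\omega,\mu]+c/k^l+Bq^{-1}x$, a solution of \eqref{bvp}; hence the correspondence is bijective and the uniqueness of $(\mu_k,c_k)$ is inherited from \cite[Thm.~4.4]{DaMiMu22}. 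Adding back $Bq^{-1}x$ gives $u[k]=v_q^-[\omega,\mu_k]+c_k/k^l+Bq^{-1}x$, as claimed.

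I expect no serious obstacle: the substantive content — existence, uniqueness, and the single-layer representation of the periodic Robin solution — is packaged in the cited \cite[Thm.~4.4]{DaMiMu22}. The only point needing care is the bookkeeping of the change of unknown $u\mapsto u-Bq^{-1}x$: one must correctly track how $Bq^{-1}x$ enters the Dirichlet-type term (producing the contribution $-b[k]\,Bq^{-1}x$ on the right-hand side) and the traction term (producing $-T(\omega,Bq^{-1})\nu_{\Omega_Q}$), and check that what remains is an admissible $C^{m-1,\alpha}$ datum so that \cite[Thm.~4.4]{DaMiMu22} indeed applies. The remaining manipulations are a direct transcription through the layer-potential jump formulas.
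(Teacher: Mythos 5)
Your proposal is correct and follows the same route as the paper, which simply invokes \cite[Thm.~4.4]{DaMiMu22} together with the jump relations of the periodic elastic single layer potential to "immediately deduce" the representation. You have merely spelled out the bookkeeping (the affine subtraction $u[k]-Bq^{-1}x$, the induced boundary datum, and the relabeling $c'=c_k/k^l$) that the paper leaves implicit.
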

We introduce the operator $\Lambda$ from $]-k_0,k_0[$ to $\mathcal{L}(C^{m-1,\alpha}(\partial \Omega_Q,\mathbb{R}^n)_0\times \mathbb{R}^n, C^{m-1,\alpha}(\partial \Omega_Q,\mathbb{R}^n))$ defined by
\[
\Lambda[k](\mu,c)(x) :=\frac{1}{2}\mu(x) +W_{q}^\ast[\omega,\mu](x)+b[k](x)V_{q}[\omega, \mu](x) +k^{-l}b[k](x) c  \qquad \forall x \in \partial \Omega_Q\, ,
\]
for all $(\mu,c) \in C^{m-1,\alpha}(\partial \Omega_Q,\mathbb{R}^n)_0\times \mathbb{R}^n$ and for all $k \in ]-k_0,k_0[$.
We observe that \eqref{eq:exbvp3} can be rewritten as
\[
\begin{split}
\Lambda[k](\mu,c)(x)=g(x)-T(\omega,Bq^{-1})\nu_{\Omega_Q}(x)-b[k](x)Bq^{-1}x\qquad \forall x \in \partial \Omega_Q\, .
\end{split}
\]
Moreover, for $k=0$ the linear operator $\Lambda[0]$ becomes
\begin{equation}\label{eq:Lambda0}
\begin{split}
\Lambda[0](\mu,c)(x){=}&\frac{1}{2}\mu(x) +W_{q}^\ast[\omega,\mu](x) +\tilde b(x) c \qquad \forall x \in \partial \Omega_Q\, , \forall (\mu,c) \in C^{m-1,\alpha}(\partial \Omega_Q,\mathbb{R}^n)_0\times \mathbb{R}^n\, ,
\end{split}
\end{equation}
and $\Lambda[0]$ is invertible with bounded inverse in  $\mathcal{L}(C^{m-1,\alpha}(\partial \Omega_Q,\mathbb{R}^n), C^{m-1,\alpha}(\partial \Omega_Q,\mathbb{R}^n)_0\times \mathbb{R}^n)$ (see \cite[Lem.~4.2]{DaMiMu22}). Further properties of $\Lambda[k]$ are presented in the following.
\begin{proposition}\label{prop:anLmbd}
The following statements hold.
\begin{itemize}
\item[(i)] The map from $]-k_0,k_0[$ to $\mathcal{L}(C^{m-1,\alpha}(\partial \Omega_Q,\mathbb{R}^n)_0\times \mathbb{R}^n, C^{m-1,\alpha}(\partial \Omega_Q,\mathbb{R}^n))$  that takes $k$ to $\Lambda[k]$ is real analytic.
\item[(ii)] There exists $k_1 \in ]0,k_0[$ such that for each $k \in ]-k_1,k_1[$ the linear operator $\Lambda[k]$ is invertible with inverse in the space  $\mathcal{L}(C^{m-1,\alpha}(\partial \Omega_Q,\mathbb{R}^n), C^{m-1,\alpha}(\partial \Omega_Q,\mathbb{R}^n)_0\times \mathbb{R}^n)$ and such that the map from $]-k_1,k_1[$ to $\mathcal{L}(C^{m-1,\alpha}(\partial \Omega_Q,\mathbb{R}^n), C^{m-1,\alpha}(\partial \Omega_Q,\mathbb{R}^n)_0\times \mathbb{R}^n)$ that takes $k$ to $(\Lambda[k])^{(-1)}$ is real analytic.
\end{itemize}
\end{proposition}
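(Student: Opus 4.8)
The plan is to establish the two statements of Proposition \ref{prop:anLmbd} separately, relying on the decomposition of $\Lambda[k]$ into a $k$-independent ``principal part'' and a perturbation that is controlled by the factor $k^{-l}b[k]$.

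For statement (i), first I would write
\[
\Lambda[k](\mu,c) = \Lambda[0](\mu,c) + k^l\Bigl(\bigl(k^{-l}b[k]\bigr)V_q[\omega,\mu] + \bigl(k^{-l}b[k]\bigr)c\Bigr) - \tilde b\, c + k^l\bigl(k^{-l}b[k]\bigr) c,
\]
or more simply observe that $\Lambda[k](\mu,c) = \tfrac12\mu + W_q^\ast[\omega,\mu] + b[k]V_q[\omega,\mu] + k^{-l}b[k] c$, where the first two summands are constant in $k$ and bounded linear in $(\mu,c)$, and the remaining two are built from the three fixed bounded bilinear/linear maps $(f,\mu)\mapsto f\,V_q[\omega,\mu]$, $(f,c)\mapsto f\,c$ (pointwise multiplication of a $C^{m-1,\alpha}$ matrix function by a $C^{m,\alpha}$ function restricted to the boundary, respectively by a constant vector), composed with the real-analytic maps $k\mapsto b[k]$ and $k\mapsto k^{-l}b[k]$ from $]-k_0,k_0[$ into $C^{m-1,\alpha}(\partial\Omega_Q,M_n(\mathbb R))$. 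Here I would invoke: continuity of $\mu\mapsto V_q[\omega,\mu]=v_q^-[\omega,\mu]_{|\partial\Omega_Q}$ and of $\mu\mapsto W_q^\ast[\omega,\mu]$ from Section \ref{notprel}; the fact that pointwise multiplication is bilinear and continuous on the relevant Schauder spaces (a standard Banach-algebra-type property of $C^{m-1,\alpha}$); the hypothesis that $k\mapsto b[k]$ is real analytic together with the displayed property that $k\mapsto k^{-l}b[k]$ is real analytic; and the elementary fact that a continuous bilinear map composed with real-analytic maps is real analytic, while finite sums of real-analytic maps are real analytic. Assembling these gives analyticity of $k\mapsto\Lambda[k]$ in the stated operator space.

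For statement (ii), the key input is that $\Lambda[0]$ is invertible with bounded inverse (from \eqref{eq:Lambda0} and \cite[Lem.~4.2]{DaMiMu22}). Since by part (i) the map $k\mapsto\Lambda[k]$ is real analytic, hence in particular continuous, and since the set of invertible operators is open in $\mathcal L(\cdot,\cdot)$ and inversion is real analytic on that set, the composition $k\mapsto(\Lambda[k])^{(-1)}$ is real analytic on a neighborhood $]-k_1,k_1[$ of $0$ on which $\Lambda[k]$ stays invertible. Concretely, I would fix $k_1\in]0,k_0[$ small enough that $\|\Lambda[k]-\Lambda[0]\|<1/\|(\Lambda[0])^{(-1)}\|$ for $|k|<k_1$, so that the Neumann series $(\Lambda[k])^{(-1)}=\sum_{j\geq0}\bigl((\Lambda[0])^{(-1)}(\Lambda[0]-\Lambda[k])\bigr)^j(\Lambda[0])^{(-1)}$ converges and shows both invertibility and (together with part (i)) real analyticity of the inverse.

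I do not expect a genuine obstacle here — the statement is essentially an exercise in the functional-analytic machinery — but the step requiring the most care is part (i): one must be precise about the mapping properties and bilinearity of the pointwise-multiplication operators between the Schauder spaces $C^{m-1,\alpha}(\partial\Omega_Q,M_n(\mathbb R))$, $C^{m,\alpha}_q(\overline{\mathbb S[\Omega_Q]^-},\mathbb R^n)$, and $C^{m-1,\alpha}(\partial\Omega_Q,\mathbb R^n)$, and about the fact that restriction to $\partial\Omega_Q$ is continuous, before one can legitimately conclude that composing with the real-analytic maps $k\mapsto b[k]$ and $k\mapsto k^{-l}b[k]$ yields a real-analytic operator-valued map. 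Everything else is a routine application of the open-mapping-type stability of invertibility and the analyticity of operator inversion.
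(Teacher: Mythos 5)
Your argument is correct and matches the paper's approach: boundedness of $V_q[\omega,\cdot]$ and $W_q^\ast[\omega,\cdot]$ together with the real analyticity of $k\mapsto k^{-l}b[k]$ for (i), and openness of the set of linear homeomorphisms plus real-analytic inversion of operators (the paper cites Hille--Phillips; you spell out the Neumann-series estimate explicitly) for (ii). The only blemish is that your first displayed ``decomposition'' of $\Lambda[k](\mu,c)$ does not balance algebraically (it produces a spurious extra $b[k]\,c$ in place of $k^{-l}b[k]\,c$), but you immediately discard it in favour of the correct and simpler expression $\Lambda[k](\mu,c)=\tfrac12\mu+W_q^\ast[\omega,\mu]+b[k]V_q[\omega,\mu]+k^{-l}b[k]\,c$, so nothing downstream is affected.
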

\begin{proof}
The validity of  (i) follows by the boundedness of the linear operators $W_{q}^\ast[\omega,\cdot]$ and $V_{q}[\omega,\cdot]$ and by the real analyticity of $k\mapsto k^{-l}b[k]$. To prove (ii), we note that since the set of linear
homeomorphisms is open in the set of linear and continuous operators,
and since the map that takes a linear invertible operator to its
inverse is real analytic (cf.~\textit{e.g.}, Hille and
Phillips~\cite[Thms.~4.3.2 and 4.3.4]{HiPh57}), there exists $k_1 \in ]0,k_0[$ such that the map
that takes $k$ to $\Lambda[k]^{(-1)}$ is real analytic from $]-k_1,k_1[$ to $\mathcal{L}(C^{m-1,\alpha}(\partial \Omega_Q,\mathbb{R}^n), C^{m-1,\alpha}(\partial \Omega_Q,\mathbb{R}^n)_0\times \mathbb{R}^n)$.
\end{proof}

By Proposition \ref{prop:anLmbd}, we represent the solutions of the integral equation \eqref{eq:exbvp3} by means of real analytic maps.
\begin{corollary}
Let $(\hat{\mu},\hat{c})$ be the real analytic map from $]-k_1,k_1[$ to $C^{m-1,\alpha}(\partial \Omega_Q,\mathbb{R}^n)_0\times \mathbb{R}^n$ defined by $(\hat{\mu}[k],\hat{c}[k]) :=(\Lambda[k])^{(-1)} {\mathfrak{D}[k]}$  for all $k \in ]-k_1,k_1[$, where
\[
\mathfrak{D}[k](x):= g(x)-T(\omega,Bq^{-1})\nu_{\Omega_Q}(x)-b[k](x)Bq^{-1}x \qquad \forall k\in]-k_0,k_0[\,,\; x \in \partial \Omega_Q\, .
\]
Then $(\hat{\mu}[k],\hat{c}[k])=(\mu_k,c_k)$  for all $k \in ]0,k_1[$ and  $(\hat{\mu}[0],\hat{c}[0])$ is the unique solution $(\mu,c)$ in $C^{m-1,\alpha}(\partial \Omega_Q,\mathbb{R}^n)_0\times \mathbb{R}^n$ of
\begin{align}\label{eq:lim}
\frac{1}{2}\mu(x) &+W_{q}^\ast[\omega,\mu](x)+\tilde{b}(x) c=\mathfrak{D}[0](x)\qquad \forall x \in \partial \Omega_Q\, .
\end{align}
\end{corollary}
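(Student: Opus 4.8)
The plan is to read off all three assertions directly from Proposition~\ref{prop:anLmbd} and from the identities already recorded for $\Lambda[k]$, so the argument is essentially a bookkeeping exercise. First I would check that $k\mapsto(\hat\mu[k],\hat c[k])$ is well defined and real analytic on $]-k_1,k_1[$. By Proposition~\ref{prop:anLmbd}(ii) the map $k\mapsto(\Lambda[k])^{(-1)}$ is real analytic from $]-k_1,k_1[$ to $\mathcal L(C^{m-1,\alpha}(\partial\Omega_Q,\mathbb R^n),C^{m-1,\alpha}(\partial\Omega_Q,\mathbb R^n)_0\times\mathbb R^n)$, so it remains to observe that $k\mapsto\mathfrak D[k]$ is real analytic from $]-k_0,k_0[$ (hence from $]-k_1,k_1[$) to $C^{m-1,\alpha}(\partial\Omega_Q,\mathbb R^n)$: the first two summands in the definition of $\mathfrak D[k]$ do not depend on $k$, while $k\mapsto b[k]=k^l\,(k^{-l}b[k])$ is real analytic by the assumption that $k\mapsto k^{-l}b[k]$ is real analytic, and pointwise multiplication by the fixed function $Bq^{-1}x$ is a bounded linear operation on $C^{m-1,\alpha}(\partial\Omega_Q,\mathbb R^n)$. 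Composing these two real analytic maps and using that the evaluation map $(\text{operator},\text{vector})\mapsto\text{operator applied to vector}$ is bilinear and continuous, we conclude that $k\mapsto(\hat\mu[k],\hat c[k])=(\Lambda[k])^{(-1)}\mathfrak D[k]$ is real analytic.

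Next I would identify $(\hat\mu[k],\hat c[k])$ with $(\mu_k,c_k)$ for $k\in]0,k_1[$. As already noted in the text, equation~\eqref{eq:exbvp3} is equivalent to $\Lambda[k](\mu,c)=\mathfrak D[k]$. By Proposition~\ref{prop:exbvp}, for $k\in]0,k_0[$ the pair $(\mu_k,c_k)$ is the unique element of $C^{m-1,\alpha}(\partial\Omega_Q,\mathbb R^n)_0\times\mathbb R^n$ solving~\eqref{eq:exbvp3}; on the other hand, for $k\in]-k_1,k_1[$ the operator $\Lambda[k]$ is invertible, so the unique solution in that space of $\Lambda[k](\mu,c)=\mathfrak D[k]$ is $(\Lambda[k])^{(-1)}\mathfrak D[k]=(\hat\mu[k],\hat c[k])$. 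Hence $(\hat\mu[k],\hat c[k])=(\mu_k,c_k)$ for every $k\in]0,k_1[$.

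Finally I would treat the limiting case $k=0$. Since $b[0]=0$ by~\eqref{c3}, we have $\mathfrak D[0](x)=g(x)-T(\omega,Bq^{-1})\nu_{\Omega_Q}(x)$ for all $x\in\partial\Omega_Q$, and by~\eqref{eq:Lambda0} the equation $\Lambda[0](\mu,c)=\mathfrak D[0]$ is exactly~\eqref{eq:lim}. Because $\Lambda[0]$ is invertible with bounded inverse (by \cite[Lem.~4.2]{DaMiMu22}, as recalled before Proposition~\ref{prop:anLmbd}), equation~\eqref{eq:lim} has a unique solution in $C^{m-1,\alpha}(\partial\Omega_Q,\mathbb R^n)_0\times\mathbb R^n$, and this solution is $(\Lambda[0])^{(-1)}\mathfrak D[0]=(\hat\mu[0],\hat c[0])$. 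The only step requiring any care is the real analyticity of $k\mapsto\mathfrak D[k]$, namely that analyticity of $k\mapsto k^{-l}b[k]$ together with the factor $k^l$ does yield analyticity of $k\mapsto b[k]$ (and not merely continuity); everything else is an immediate consequence of the invertibility statements for $\Lambda[k]$.
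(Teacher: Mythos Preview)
Your proof is correct and follows essentially the same route as the paper's, only with the details spelled out more fully (the paper's proof merely cites Proposition~\ref{prop:anLmbd} for the real analyticity and the identification with $(\mu_k,c_k)$, and then invokes~\eqref{eq:Lambda0} for the $k=0$ case). One small remark: you argue that $k\mapsto b[k]$ is real analytic via the factorization $b[k]=k^l\cdot(k^{-l}b[k])$, but in fact the analyticity of $k\mapsto b[k]$ is a direct hypothesis of the paper, so that detour is unnecessary.
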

\begin{proof}
By Proposition \ref{prop:anLmbd},  $k \mapsto (\hat{\mu}[k],\hat{c}[k])$ is real analytic from $]-k_1,k_1[$ to $C^{m-1,\alpha}(\partial \Omega_Q,\mathbb{R}^n)_0\times \mathbb{R}^n$ and $(\hat{\mu}[k],\hat{c}[k])=(\mu_k,c_k)$  for all $k \in ]0,k_1[$. Since $(\hat{\mu}[0],\hat{c}[0]){:=} (\Lambda[0])^{(-1)} \mathfrak{D}{[0]}$, by equation \eqref{eq:Lambda0}, we deduce that $(\hat{\mu}[0],\hat{c}[0])$ is the unique solution $(\mu,c)$ in $C^{m-1,\alpha}(\partial \Omega_Q,\mathbb{R}^n)_0\times \mathbb{R}^n$ of
equation \eqref{eq:lim}.
\end{proof}
Since $k\mapsto \frac{{b[k]}}{k^l}$ is real analytic, there exist $\tilde{k}\in ]-k_0,k_0[$ and a family {$\{{b^\#_j}\}_{j\in \mathbb{N}}$ in $C^{m-1,\alpha}(\partial \Omega_Q,M_n(\mathbb{R}))$} such that
${b[k]}=k^l\sum_{j=0}^{+\infty}{b^\#_j}k^j$ for all $k \in ]-\tilde{k},\tilde{k}[$,  where the series $\sum_{j=0}^{+\infty}{b^\#_j}k^j$ converges normally in {$C^{m-1,\alpha}(\partial \Omega_Q,M_n(\mathbb{R}))$} for all $k \in ]-\tilde{k},\tilde{k}[$.
Possibly taking a smaller $\tilde{k}$, we note that
\begin{align*}
\Lambda[k](\mu,c)
&=\Lambda[0](\mu,c)+\sum_{j=1}^{+\infty}\Bigg({b^\#_{j-l}} V_{q}[\omega, \mu](x) +  {b^\#_j}   c\Bigg)k^j\, ,
\end{align*}
where we understand that ${b^\#_{j-l}}=0$ if $j<l$ and where the series $\sum_{j=1}^{+\infty}\Bigg({b^\#_{j-l}} V_{q}[\omega, \mu](x) +  {b^\#_j}   c\Bigg)k^j$ converges normally in $\mathcal{L}(C^{m-1,\alpha}(\partial \Omega_Q,\mathbb{R}^n)_0\times \mathbb{R}^n, C^{m-1,\alpha}(\partial \Omega_Q,\mathbb{R}^n))$ for all $k \in ]-\tilde{k},\tilde{k}[$.
We find convenient to set
\begin{align*}
&R_j(\mu,c){:=} {b^\#_{j-l}} V_{q}[\omega, \mu](x) +  {b^\#_j}   c\qquad \forall j \in \mathbb{N}\setminus \{0\}\, ,\qquad R[k](\mu,c){:=} \sum_{j=1}^{+\infty}R_j(\mu,c)k^j\, ,
\end{align*}
and accordingly $\Lambda[k]=\Lambda[0]+R[k]$. By the Neumann series theorem, possibly taking again a smaller $\tilde{k}$, we have
\[
(\Lambda[k])^{(-1)}=(\Lambda[0])^{(-1)}+\sum_{r=1}^{+\infty}(-1)^r \bigg((\Lambda[0])^{(-1)}R[k]\bigg)^r (\Lambda[0])^{(-1)}\, ,
\]
where for all $k\in ]-\tilde{k},\tilde{k}[$ the series converges normally in $\mathcal{L}(C^{m-1,\alpha}(\partial \Omega_Q,\mathbb{R}^n),C^{m-1,\alpha}(\partial \Omega_Q,\mathbb{R}^n)_0\times \mathbb{R}^n)$. For all $r\in \mathbb{N}\setminus \{0\}$, we have
\[
\bigg((\Lambda[0])^{(-1)}R[k]\bigg)^r = \sum_{j=1}^{+\infty}\Bigg(\sum_{\substack{j_{l_1},\dots,j_{l_r}\in \mathbb{N}\setminus\{0\} \\ j_{l_1}+\dots j_{l_r}=j}}\bigg((\Lambda[0])^{(-1)}R_{j_1} \bigg)\cdot \dots \cdot \bigg((\Lambda[0])^{(-1)}R_{j_r} \bigg) \Bigg) k^j\, ,
\]
where for all $k\in ]-\tilde{k},\tilde{k}[$ the series converges normally in $\mathcal{L}(C^{m-1,\alpha}(\partial \Omega_Q,\mathbb{R}^n),C^{m-1,\alpha}(\partial \Omega_Q,\mathbb{R}^n)_0\times \mathbb{R}^n)$. Then {we set $L_0{:=} (\Lambda[0])^{(-1)}$ and for each $j\in \mathbb{N}\setminus \{0\}$} we define $L_j \in \mathcal{L}(C^{m-1,\alpha}(\partial \Omega_Q,\mathbb{R}^n),C^{m-1,\alpha}(\partial \Omega_Q,\mathbb{R}^n)_0\times \mathbb{R}^n)$ as
\begin{equation}\label{eq:Lj}
L_j{:=} \sum_{r=1}^{+\infty}(-1)^r \Bigg(\sum_{\substack{j_{l_1},\dots,j_{l_r}\in \mathbb{N}\setminus\{0\} \\ j_{l_1}+\dots +j_{l_r}=j}}\bigg((\Lambda[0])^{(-1)}R_{j_1} \bigg)\cdot \dots \cdot \bigg((\Lambda[0])^{(-1)}R_{j_r} \bigg) \Bigg)(\Lambda[0])^{(-1)}\, .
\end{equation}
Accordingly, possibly taking a smaller $\tilde{k}$, one can verify that {
$
(\Lambda[k])^{(-1)}=(\Lambda[0])^{(-1)}+\sum_{j=1}^{+\infty}L_j k^j\,
$, 
where for all $k\in ]-\tilde{k},\tilde{k}[$ the series converges normally in $\mathcal{L}(C^{m-1,\alpha}(\partial \Omega_Q,\mathbb{R}^n),C^{m-1,\alpha}(\partial \Omega_Q,\mathbb{R}^n)_0\times \mathbb{R}^n)$.
{Then we introduce the sequence $\{d_j\}_{j\in \mathbb{N}}$ in $C^{m-1,\alpha}(\partial \Omega_Q,\mathbb{R}^n)$ by setting
\begin{align*}
 d_0(x){:=} g(x)-T(\omega,Bq^{-1})\nu_{\Omega_Q}(x) \quad   \forall x \in \partial \Omega_Q\, ,
\qquad d_j(x) :=-b^\#_{j-l}(x)Bq^{-1}x  \quad \forall x \in \partial \Omega_Q \, ,  \forall j \in \mathbb{N}\setminus \{0\}\, ,
\end{align*}
where we understand that $b^\#_{j-l}=0$ if $j<l$. Possibly shrinking $\tilde{k}$, we note that $\mathfrak{D}[k]=\sum_{j=0}^{+\infty}d_jk^j$, where  for all $k\in ]-\tilde{k},\tilde{k}[$ the series converges normally in $C^{m-1,\alpha}(\partial \Omega_Q,\mathbb{R}^n)$.}  Then by the real analyticity of $k\mapsto (\hat{\mu}[k],\hat{c}[k])$ and {the expressions for $(\Lambda[k])^{(-1)}$ and for $\mathfrak{D}[k]$}, we deduce  the following.}

\begin{corollary}\label{cor:seq}
Let
\[
(\hat{\mu}_0,\hat{c}_0)=(\Lambda[0])^{(-1)} ({d_0})\, ,\qquad {(\hat{\mu}_j,\hat{c}_j)=\sum_{\substack{j_1,j_2 \in \mathbb{N} \\j_1+j_2=j}}L_{j_1} (d_{j_2})} \qquad \forall j \in \mathbb{N}\setminus \{0\}\, ,
\]
where { $L_0{:=} (\Lambda[0])^{(-1)}$ and} $L_j$ is  as in \eqref{eq:Lj} for  $j \in \mathbb{N}\setminus \{0\}$. {Then there exists $k_2\in ]0,k_1[$  such that $(\hat{\mu}[k],\hat{c}[k])=\sum_{j=0}^{+\infty}(\hat{\mu}_j,\hat{c}_j) k^j$  for all $k \in ]-k_2,k_2[$, where the series converges} normally in $C^{m-1,\alpha}(\partial \Omega_Q,\mathbb{R}^n)_0\times \mathbb{R}^n$ for all $k \in ]-k_2,k_2[$.
\end{corollary}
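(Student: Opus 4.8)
The plan is to read off the expansion of $(\hat\mu[k],\hat c[k])=(\Lambda[k])^{(-1)}\mathfrak D[k]$ by multiplying together the two power series already in hand, namely the Neumann expansion $(\Lambda[k])^{(-1)}=(\Lambda[0])^{(-1)}+\sum_{j=1}^{+\infty}L_jk^j$ and the expansion $\mathfrak D[k]=\sum_{j=0}^{+\infty}d_jk^j$, which converge normally on $]-\tilde k,\tilde k[$ in $\mathcal{L}(C^{m-1,\alpha}(\partial \Omega_Q,\mathbb{R}^n),C^{m-1,\alpha}(\partial \Omega_Q,\mathbb{R}^n)_0\times \mathbb{R}^n)$ and in $C^{m-1,\alpha}(\partial \Omega_Q,\mathbb{R}^n)$, respectively. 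First I would invoke the elementary fact that if $\mathcal X,\mathcal Y,\mathcal Z$ are Banach spaces, $\beta\colon\mathcal X\times\mathcal Y\to\mathcal Z$ is a continuous bilinear map, and $\sum_j a_jk^j$, $\sum_j b_jk^j$ are power series with coefficients in $\mathcal X$ and in $\mathcal Y$ converging normally for $|k|<\rho$, then the Cauchy product $\sum_j\bigl(\sum_{j_1+j_2=j}\beta(a_{j_1},b_{j_2})\bigr)k^j$ converges normally for $|k|<\rho$ and its sum is $\beta$ applied to the two sums (this is the standard reordering-of-absolutely-convergent-double-series argument; see, e.g., standard references on analytic maps between Banach spaces). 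Here I take $\mathcal X=\mathcal{L}(C^{m-1,\alpha}(\partial \Omega_Q,\mathbb{R}^n),C^{m-1,\alpha}(\partial \Omega_Q,\mathbb{R}^n)_0\times \mathbb{R}^n)$, $\mathcal Y=C^{m-1,\alpha}(\partial \Omega_Q,\mathbb{R}^n)$, $\mathcal Z=C^{m-1,\alpha}(\partial \Omega_Q,\mathbb{R}^n)_0\times \mathbb{R}^n$, and $\beta$ the evaluation map $\beta(T,f):=T(f)$, which is bounded and bilinear.

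Applying this with $a_0=L_0=(\Lambda[0])^{(-1)}$, $a_j=L_j$ for $j\ge1$, with $b_j=d_j$, and with $\rho=\tilde k$, one obtains that on $]-\tilde k,\tilde k[$ the product $(\Lambda[k])^{(-1)}\mathfrak D[k]$ coincides with the normally convergent series $\sum_{j=0}^{+\infty}\bigl(\sum_{j_1+j_2=j}L_{j_1}(d_{j_2})\bigr)k^j$, i.e.\ with $\sum_{j=0}^{+\infty}(\hat\mu_j,\hat c_j)k^j$ in the notation of the statement (the $j=0$ term being $L_0(d_0)=(\Lambda[0])^{(-1)}(d_0)$). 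Since the left-hand side equals $(\hat\mu[k],\hat c[k])$ for every $k\in]-k_1,k_1[$, it then suffices to take any $k_2\in]0,\min\{k_1,\tilde k\}[\subseteq]0,k_1[$ to conclude that $(\hat\mu[k],\hat c[k])=\sum_{j=0}^{+\infty}(\hat\mu_j,\hat c_j)k^j$ for all $k\in]-k_2,k_2[$, the series converging normally in $C^{m-1,\alpha}(\partial \Omega_Q,\mathbb{R}^n)_0\times \mathbb{R}^n$. One could also argue more abstractly: by Proposition \ref{prop:anLmbd} the map $k\mapsto(\hat\mu[k],\hat c[k])$ is already known to be real analytic, so its Taylor coefficients at $0$ are uniquely determined, and the content of the corollary is merely the explicit identification of these coefficients as $(\hat\mu_j,\hat c_j)=\sum_{j_1+j_2=j}L_{j_1}(d_{j_2})$.

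No genuine obstacle remains here: the proof is pure bookkeeping on top of Proposition \ref{prop:anLmbd} and the previously displayed expansions of $(\Lambda[k])^{(-1)}$ and $\mathfrak D[k]$. The only points deserving a line of care are (i) checking that the evaluation map $(T,f)\mapsto T(f)$ is continuous and bilinear, so that the product-of-power-series lemma applies, and (ii) choosing $k_2$ no larger than the radius $\tilde k$ common to the two factors (and smaller than $k_1$), so that both series simultaneously represent their sums on $]-k_2,k_2[$. The same double-series reordering also guarantees, with no extra work, that the finite sums $\sum_{j_1+j_2=j}L_{j_1}(d_{j_2})$ are well defined and that the resulting power series enjoys the asserted normal convergence.
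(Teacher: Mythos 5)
Your proof is correct and follows essentially the same route as the paper, which builds up the normally convergent expansions of $(\Lambda[k])^{(-1)}$ and $\mathfrak D[k]$ and then obtains the expansion of $(\hat\mu[k],\hat c[k])$ by forming their product; you have merely made explicit the Cauchy-product bookkeeping and the role of the continuous bilinear evaluation map, which the paper leaves implicit.
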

%
We are now able to prove  Theorem \ref{thm:expbvp}.
\begin{proof}[Proof of Theorem \ref{thm:expbvp}]
We already know that  for all $k \in ]-k_2,k_2[$ the series $\sum_{j=0}^{+\infty}\hat{c}_jk^j$ converges normally in $\mathbb{R}^n$  and that  the series $\sum_{j=0}^{+\infty}\hat{\mu}_jk^j$ converges normally in $C^{m-1,\alpha}(\partial \Omega_Q,\mathbb{R}^n)_0$. Since $v_{q}^-[\omega,\cdot]$ is a bounded linear operator from $C^{m-1,\alpha}(\partial \Omega_Q,\mathbb{R}^n)_0$ to $C^{m,\alpha}_q(\overline{\mathbb{S}[\Omega_Q]^{-}},\mathbb{R}^n)$, we deduce that taking a sufficiently small $k_\# \in ]0,k_2[$ for all $k \in ]-k_\#,k_\#[$ the series $\sum_{j=0}^{+\infty}v_{q}^-[\omega,\hat{\mu}_j](x)k^j$ converges normally in $C^{m,\alpha}_q(\overline{\mathbb{S}[\Omega_Q]^{-}},\mathbb{R}^n)$. Then, the representation formula of Proposition \ref{prop:exbvp} completes the proof.
\end{proof}

\section{Conclusions}

We have used the Functional Analytic Approach to study the  Lam\'e equations in a periodic domain  with a Robin-type boundary condition that turns into a pure traction one. The change in the boundary condition is obtained multiplying the Dirichlet-type term by a $k$-dependent matrix function $b[k](\cdot)$ that vanishes for $k=0$. We have seen that for $k>0$ close to $0$ the solution can be written as the sum of two converging power series of  $k$, one being multiplied by by  the singular function $1/k^l$, and a linear function that takes care of the quasi-periodicity of the solution (and disappears for periodic solutions). The positive natural number $l$ depends on the vanishing order of the matrix $b[k]$ as $k$  tends to $0$.

\section*{Acknowledgement}
P.M.~and G.M.~acknowledge the support from EU through the H2020-MSCA-RISE-2020 project EffectFact,
Grant agreement ID: 101008140.   M.D.R. and P.M.~are   members of the Gruppo Nazionale per l'Analisi Matematica, la Probabilit\`a e le loro Applicazioni (GNAMPA) of the Istituto Nazionale di Alta Matematica (INdAM). G.M.~acknowledges also Ser Cymru Future Generation Industrial Fellowship number AU224 -- 80761 and thanks the Royal Society for the Wolfson Research Merit Award.

{\small

}

\end{document}